\def\pushright#1{{
   \parfillskip=0pt            
   \widowpenalty=10000         
   \displaywidowpenalty=10000  
   \finalhyphendemerits=0      
  %
   \leavevmode                 
   \unskip                     
   \nobreak                    
   \hfil                       
   \penalty50                  
   \hskip.2em                  
   \null                       
   \hfill                      
   {#1}                        
  %
   \par}}                      
\def\qed{\pushright{$\square$}\penalty-700 \smallskip}
\newenvironment{proof}%
 {\begin{trivlist}\item[]{\bf Proof }}%
 {\qed\end{trivlist}}
\newtheorem{theorem}{Theorem}[section]
\newtheorem{proposition}[theorem]{Proposition}
\newtheorem{lemma}[theorem]{Lemma}
\newtheorem{corollary}[theorem]{Corollary}
\newcommand{\Bb}
{\mathbb}
\newcommand{\Char}
{{\rm Char}\ }
\newcommand{\stp}[3]
 {\mbox{$#1\! : #2 \to #3$}}
\newcommand{\Set}
 {{\bf Set}}
\newcommand{\Sh}
 {{\bf Sh}}
\newcommand{\sh}
 {{\bf sh}}
\newcommand{\type}
{{\rm Type}}
\newcommand{\op}
 {^{\rm op}}
\newcommand{\cod}
 {{\rm cod}}
\newcommand{\Cont}
 {{\bf Cont}}
\newcommand{\mono}
 {\rightarrowtail}
\newcommand{\ob}
 {{\rm ob}}
\newcommand{\empstg}
 {[\,]}
\begin{document}

\title{\bf De Morgan's law and the Theory of Fields}
\author{Olivia Caramello and Peter Johnstone\\
DPMMS, University of Cambridge,\\
Wilberforce Road, Cambridge CB3 0WB, U.K.}
\date{\today}
\maketitle

\begin{abstract}
We show that the classifying topos for the theory of fields does not
satisfy De Morgan's law, and we identify its largest dense De Morgan
subtopos as the classifying topos for the theory of fields of nonzero
characteristic which are algebraic over their prime fields.
\end{abstract}

\section*{Introduction}

This note is a tailpiece to a recent paper \cite{caramello} of the
first author, in which necessary and sufficient conditions were given
for the classifying topos of a geometric theory to satisfy De Morgan's
law. A number of examples and counterexamples were given in that
paper; one `test case' which seemed worth considering was the
(coherent) theory of fields, but it turned out that some additional
ideas were needed to handle this case. Interestingly, the germ of
these ideas was present in an old paper \cite{Diers} of the second
author --- which happened to be published in the same volume as the
first paper \cite{dml} in which the topos-theoretic ramifications of
De Morgan's law were explored.

Another new result presented in \cite{caramello} was the fact that,
for every topos $\cal E$, there exists a largest dense subtopos of
$\cal E$ satisfying De Morgan's law; we call this subtopos the {\em
DeMorganization\/} of $\cal E$, by analogy with the Booleanization which
is the largest (in fact only) dense Boolean subtopos. Explicit
examples of DeMorganizations, for toposes which do not satisfy De
Morgan's law, seem to be rather hard to find; but it turns out that
the techniques of this paper give us such a description for the
DeMorganization of the classifying topos for fields, and enable us to
show that it classifies an easily described theory. These
results are presented in section 2 of the paper; section 1 contains the
proof that the classifying topos for fields does not itself satisfy De
Morgan's law.

\section{The Theory of Fields is not De Morgan}

We recall that a commutative ring $R$ is said to be ({\it von Neumann\/}) {\it
regular\/} if, for every $x\in R$, there exists $y\in R$ satisfying $x^2y=x$ 
and $y^2x=y$. Note that this implies that $R$ is nilpotent-free, since from
$x^2=0$ we may deduce $x=x^2y=0$. Also, the $y$ whose existence is asserted is
uniquely determined by $x$, since if $y$ and $z$ both satisfy the equations
we have $x^2(y-z)^2 = (x-x)(y-z)=0$, whence $x(y-z)=0$, and therefore 
$y-z=(y^2-z^2)x = x(y-z)(y+z)=0$. Thus we may think of regular rings as 
commutative rings equipped with an additional unary operation $(-)^*$, 
satisfying $x^2x^*=x$ and $x(x^*)^2=x^*$ for all $x$. (Note that it also 
follows from the uniqueness of $x^*$ that we have $x^{**}=x$.)

Any field becomes a regular ring if we define $x^*=x^{-1}$ for all $x\neq 0$,
and $0^*=0$. Conversely, it is not hard to show that any prime ideal in a
regular ring is maximal, and hence that any regular ring is a subdirect
product of fields.

In what follows, we shall work with the category $\cal C$ of finitely-presented
regular rings, considered as a full subcategory of the category $\bf CRng$ of
commutative rings. (Note that any ring homomorphism between regular rings
automatically commutes with the $(-)^*$ operation; similarly, if $I$
is any (ordinary) ring ideal of a regular ring $R$, the quotient $R/I$
is regular.) However, the reader should
beware that finitely-presented regular rings are not in general 
finitely-presented as rings, because of the presence of the additional 
operation $(-)^*$. 

We may define the notion of characteristic for regular rings, not as a single
prime number but as a set of primes. For definiteness, let us write
${\Bb P}$ for the set of (nonzero) prime numbers, and ${\Bb P}^+$
for ${\Bb P}\cup\{0\}$. Then we define
$$ \Char R = \{p\in{\Bb P}^+\mid {\rm char\ }R/M = p \hbox{ for some maximal
ideal } M\subseteq R\}\ .$$
Equivalently, $p\in \Char R$ iff there exists a homomorphism from $R$ to
{\it some\/} field of characteristic $p$. (For nonzero $p$, we have the
further equivalent condition that $p\in\Char R$ iff $R/(p)$ is non-degenerate.)
We note in passing that if there exists a homomorphism \stp{h}{R}{S},
then $\Char S \subseteq \Char R$; and we have equality here if $h$ is 
injective, since if \stp{k}{R}{F} is a homomorphism from $R$ to a
field, we can find a prime ideal of $S$ disjoint from the set
$$\{h(x)\mid x \in R, k(x)\neq 0\}\ ,$$
and the quotient of $S$ by this ideal must have the same
characteristic as $F$.

\begin{lemma}
If $R$ is a finitely-presented regular ring, then $\Char R \subseteq {\Bb P}^+$
is either a finite set not containing $0$, or a cofinite set containing $0$.
\end{lemma}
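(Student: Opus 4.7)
The plan is to reduce the lemma to the elementary fact that the constructible subsets of ${\rm Spec}({\Bb Z})$ are precisely the finite sets of closed points together with their complements. The reduction goes via an auxiliary finitely generated commutative ${\Bb Z}$-algebra $S$ whose ring homomorphisms into fields correspond bijectively to the regular-ring homomorphisms out of $R$.

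First I would translate the finite presentation of $R$ as a regular ring into a finite presentation as a plain ${\Bb Z}$-algebra, at the cost of extra generators. Writing the presentation as generators $x_1,\ldots,x_n$ with relations $r_1,\ldots,r_k$, I would enumerate the finitely many subterms of the $r_i$ of the form $s^*$, say $s_1^*,\ldots,s_m^*$, ordered so that any $*$-subterm occurring in $s_j$ appears earlier in the list. Introduce fresh variables $z_1,\ldots,z_m$, replace every $s_j^*$ by $z_j$ throughout to obtain polynomials $\hat r_i$ and $\hat s_j$ in ${\Bb Z}[x_1,\ldots,x_n,z_1,\ldots,z_m]$, and adjoin the polynomial forms $\hat s_j^2 z_j = \hat s_j$ and $\hat s_j z_j^2 = z_j$ of the defining axioms of $*$. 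Let $S$ be the resulting quotient. Because, as observed in the excerpt, any ring homomorphism from a regular ring to a field automatically preserves $*$, the relations above force $\phi(z_j)=\phi(\hat s_j)^*$ in any field $K$, and one verifies inductively on $j$ that the assignment $h \leftrightarrow \phi$ with $h(x_i)=\phi(x_i)$ and $\phi(z_j)=h(s_j)^*$ sets up a natural bijection between regular-ring homomorphisms $R\to K$ and ring homomorphisms $S\to K$.

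Consequently, under the identification $p\leftrightarrow(p)$, the set $\Char R$ coincides with the image of the morphism ${\rm Spec}(S)\to{\rm Spec}({\Bb Z})$. By Chevalley's theorem on morphisms of finite type between Noetherian schemes, this image is a constructible subset of ${\rm Spec}({\Bb Z})$. A quick enumeration finishes the argument: the proper closed subsets of ${\rm Spec}({\Bb Z})$ are precisely the finite sets $V((n))=\{(p):p\mid n\}$ for $n\neq 0$, so every constructible set either lies entirely among the closed points (and is a finite set not containing $(0)$) or contains the generic point $(0)$ (and hence has finite complement).

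The main obstacle is the first step: one must guarantee that the number of auxiliary variables is finite, which crucially exploits the finite presentation of $R$ \emph{as a regular ring}. The free regular ring on finitely many generators is vastly larger than the corresponding free commutative ring because $*$ can be nested arbitrarily deeply, and the warning in the excerpt that ``finitely-presented regular rings are not in general finitely-presented as rings'' is precisely the phenomenon one must work around --- the list $s_1,\ldots,s_m$ is finite only because the $r_i$ are specific finite terms. Once $S$ has been constructed, the remaining steps are routine.
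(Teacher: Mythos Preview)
Your argument is correct, and it is genuinely different from the paper's. The paper proceeds by a direct case split: if $n.1=0$ for some $n>0$ then $\Char R$ is contained in the prime divisors of $n$, while if ${\Bb Z}\hookrightarrow R$ then one picks a maximal ideal $M$ with $R/M$ of characteristic $0$, writes $R/M$ as ${\Bb Q}(t_1,\ldots,t_d,\alpha)$, builds an explicit finitely-presented regular ring $H$ with $\Char H={\Bb P}^+$ mapping onto $R/M$, and then exhibits $R/M$ as a filtered colimit of quotients $H_{F,G}$ of $H$ with $\Char H_{F,G}={\Bb P}^+\setminus F$; finite presentability of $R$ forces the map $R\to R/M$ to factor through some $H_{F,G}$, whence $\Char R$ is cofinite. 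Your route replaces this hand-built colimit argument by a single appeal to Chevalley's constructibility theorem, after the clever reduction to a finitely generated ${\Bb Z}$-algebra $S$ with the same field-valued points as $R$. Your proof is shorter and more conceptual, and makes transparent why the dichotomy in the statement is exactly the dichotomy for constructible subsets of ${\rm Spec}({\Bb Z})$; the paper's proof, on the other hand, is entirely self-contained and does not invoke any scheme-theoretic machinery, and its explicit colimit construction is reused later in the paper (the analysis of $\type(x)$ in Section~2 is modelled on it), so there is a structural payoff to the more laborious approach.
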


\begin{proof}
First suppose there is some $n\in{\Bb Z}^+$ such that $n.1=0$ in $R$. Then
this equation holds in any field to which $R$ can be mapped, so any such field
has characteristic dividing $n$. Hence $\Char R$ is contained in the (finite)
set of prime divisors of $n$.

Otherwise, we have $n.1\neq 0$ for all $n\in{\Bb Z}^+$. Then the elements of
this type form a multiplicatively closed set not containing $0$, so we may find
a prime (hence maximal) ideal $M$ not meeting this set, and the quotient $R/M$ 
is a field of characteristic $0$. Thus $0\in\Char R$. Moreover, since $R$ is 
finitely-generated as a regular ring, $R/M$ must be finitely-generated as a
field extension of $\Bb Q$, so we can write it either as a pure transcendental
extension ${\Bb Q}(t_1,\ldots,t_d)$ or in the form 
${\Bb Q}(t_1,\ldots t_d,\alpha)$ where the $t_i$ are independent
transcendentals and $\alpha$ is algebraic over ${\Bb Q}(t_1,\ldots,t_d)$. We
shall deal with the second case; the first case is easier (as is the
subcase $d=0$ of the second).

We may suppose that the minimal polynomial for $\alpha$ over ${\Bb
  Q}(t_1,\ldots,t_d)$ has the form
$$f_0(t_1,\ldots,t_d)\alpha^n + f_1(t_1,\ldots,t_d)\alpha^{n-1} + \cdots + 
f_{n-1}(t_1,\ldots,t_d)\alpha + f_n(t_1,\ldots,t_d) = 0$$
where the $f_i$ are polynomials in the $t_j$ with integer 
coefficients, and $f_0$ is not identically zero. At the cost, if
necessary, of replacing $\alpha$ by an integer multiple of itself, we
may further suppose that $f_0$ is primitive, i.e.\ that the highest
common factor of its coefficients is $1$. Now let $H$ be the
regular ring generated by $\{x_1,\ldots,x_d,y\}$ subject to the single equation
$$ f_0(x_1,\ldots,x_d)y^n + f_1(x_1,\ldots,x_d)y^{n-1} + \cdots + 
f_n(x_1,\ldots,x_d) = 0\ .$$
We claim that $\Char H = {\Bb P}^+$. It clearly contains $0$, since the 
field $R/M$ occurs as a quotient of $H$. And, for any prime $p$, we
may choose values for $x_1,\ldots,x_d$ in some field extension of
${\Bb Z}/(p)$ such that $f_0(x_1,\ldots,x_d)\neq 0)$, and then choose
a value for $y$ in some algebraic extension of this field which
satisfies the polynomial equation above; so we have a homomorphism
from $H$ to a field of characteristic $p$.

Now, for each finite subset $F$ of $\Bb P$ and each finite subset $G$ of the
set of all primitive polynomials in ${\Bb Z}[x_1,\ldots,x_d]$, let $H_{F,G}$
be the quotient of $H$ obtained by adding the relations $pp^*=1$ for each
$p\in F$, and $g(x_1,\ldots,x_d)g(x_1,\ldots,x_d)^*=1$ for each $g\in G$. 
By an easy extension of the argument above, $\Char H_{F,G} = {\Bb
  P}^+\setminus F$ (note that forcing a primitive polynomial in the
$x_j$ to be invertible does not impose any restrictions on the characteristic).
Moreover, it is clear that the $H_{F,G}$ form a directed diagram in
$\cal C$, since we have a quotient map $H_{F,G}\to H_{F',G'}$ whenever
$F\subseteq F'$ and $G\subseteq G'$, and that the colimit of this
diagram in the category of all regular rings is
isomorphic to $R/M$. But $R$, being finitely-presented, is finitely-presentable
(in the categorical sense) as an object of the latter category; hence the
quotient map $R\to R/M$ factors through $H_{F,G}$ for some pair $(F,G)$.
As we observed earlier, this forces $\Char R\supseteq \Char H_{F,G}$; so
$\Char R$ is cofinite.
\end{proof}

Note in passing that no field of characteristic $0$ can be finitely presented
as a regular ring.

We now impose on ${\cal C}\op$ the Grothendieck topology $J$ which makes
$\Sh({\cal C}\op,J)$ into the classifying topos for the geometric theory
of fields. As described in \cite{elephant}, D3.1.11($b$), this is the smallest
coverage for which the degenerate ring $0$ is covered by the empty cosieve
(we shall tend to think of the covers as cosieves in $\cal C$ rather than
sieves in ${\cal C}\op$) and, for each object $R$ and each $a\in R$, $R$ is 
covered by the cosieve $S^R_a$ generated by the two quotient maps $R\to R/(a)$ 
and $R\to R/(aa^*-1)$. It is not hard to see that, since $aa^*$ is
idempotent, the induced map $R\to R/(a)\times R/(aa^*-1)$ is an isomorphism,
and hence that the coverage $J$ is subcanonical; i.e.\ every representable
functor ${\cal C}\,(S,-)$ is a sheaf for it. We shall also need:

\begin{lemma}
For any $J$-covering cosieve $S$ on an object $R$, we have
$\Char R = \bigcup \{\Char\!\!(\cod\ h)\mid h\in S\}$.
\end{lemma}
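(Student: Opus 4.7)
The inclusion $\bigcup\{\Char(\cod h) \mid h \in S\} \subseteq \Char R$ holds for any cosieve, by the remark preceding Lemma 1.1 that homomorphisms can only shrink the characteristic; the substance of the lemma is the reverse inclusion. My plan is to prove a formally stronger factorisation property by induction on the construction of $J$. Call a cosieve $S$ on $R$ \emph{field-covering} if every homomorphism $k : R \to F$ from $R$ to a field factors as $k = m h$ for some $h \in S$ and some $m : \cod h \to F$, and let $K$ denote the class of such cosieves. Applied to a field quotient $R \to F$ of characteristic $p$, any such factorisation witnesses $p \in \Char(\cod h)$, so $K \supseteq J$ gives the lemma, and the plan reduces to showing that $K$ is a Grothendieck topology on ${\cal C}\op$ containing the basic covers that generate $J$.

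The basic covers are easy. The empty cosieve on the zero ring lies in $K$ vacuously, since $0$ admits no homomorphisms to fields. For $S^R_a$, given $k : R \to F$ and writing $\bar a = k(a)$, uniqueness of the $(-)^*$ operation in $F$ forces either $\bar a = 0$ (so $k$ factors through $R \to R/(a)$) or $\bar a^* = \bar a^{-1}$ and hence $\bar a \bar a^* = 1$ (so $k$ factors through $R \to R/(aa^* - 1)$).

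For the topology axioms, the maximal cosieve contains the identity on $R$, and transitivity is immediate by factoring twice in turn. The substantive step is stability under pullback. Given $S \in K$ on $R$ and a morphism \stp{g}{R}{R'} in $\cal C$, I want $g^*(S) \in K$, where $g^*(S) = \{f : R' \to X \mid f g \in S\}$. Given a field quotient $k : R' \to F$, applying the field-covering property of $S$ to the composite $k g : R \to F$ yields $h : R \to Y$ in $S$ and $m : Y \to F$ with $m h = k g$. The category of regular rings, being equationally presented, has pushouts, so form the pushout $X$ of $Y \leftarrow R \to R'$, with structure maps $h' : Y \to X$ and $f : R' \to X$ satisfying $f g = h' h$. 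Since $h \in S$ and $S$ is a cosieve, $f g = h' h \in S$, hence $f \in g^*(S)$; and since $m h = k g$, the universal property of the pushout yields $\phi : X \to F$ with $\phi f = k$, exhibiting $k$ as factoring through $f$.

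The main obstacle is precisely this stability step, and it is what motivates strengthening the inductive hypothesis beyond the weaker statement $\Char R \subseteq \bigcup \Char(\cod h)$. A direct induction on the weaker statement breaks down in the pullback case, because the pushout $X$ can easily be the zero ring even when both $Y$ and $R'$ are nondegenerate, so $\Char X$ may well be empty and the weaker hypothesis gives no control over whether the particular characteristic $p$ we care about survives. The field-covering property pins down the specific field $F$ along the pushout diagram and sidesteps this obstruction via the universal property.
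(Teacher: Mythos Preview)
Your argument is correct and follows the same strategy the paper sketches: verify the property for the generating covers (empty cosieve on $0$ and the $S^R_a$) and then show that the ``good'' cosieves form a Grothendieck topology containing them, hence contain $J$. The paper compresses this into two sentences; you have written it out in full, and in particular you have made explicit the strengthening from the bare $\Char$-equality to the \emph{field-covering} property. This strengthening is exactly what the paper is tacitly using when it says ``each homomorphism from $R$ to a field factors through one of the two quotient maps'', and your closing paragraph is right that the weaker statement by itself is not obviously stable under pullback---so your more careful formulation is a genuine improvement in rigour over the paper's terse ``alternatively, observe that the cosieves satisfying the conclusion form a Grothendieck topology''.

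One small point to tidy: you form the pushout $X$ in the category of all regular rings, but the site is ${\cal C}\op$ with $\cal C$ the \emph{finitely-presented} regular rings, so you should note that $X$ remains in $\cal C$. This is standard (finitely-presented objects in a variety are closed under finite colimits), but since the whole argument lives in $\cal C$ it is worth a sentence.
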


\begin{proof}
This is clearly satisfied by any of the generating cosieves $S^R_a$, since each
homomorphism from $R$ to a field factors through one of the two quotient maps
generating it. We may now deduce the result by induction over the construction 
of $J$-covering cosieves; alternatively, we may observe that the cosieves 
satisfying the conclusion of the Lemma themselves form a Grothendieck topology
on ${\cal C}\op$, which must therefore contain $J$.
\end{proof}

Now, for any $A\subseteq {\Bb P}^+$ and any $R\in\ob\ {\cal C}$, let 
$T^R_A$ denote the cosieve of all \stp{h}{R}{S} for which $\Char S \subseteq 
A$. By the previous lemma, $T^R_A$ is a $J$-closed cosieve, so it defines a
subobject of the representable functor ${\cal C}\,(R,-)$ in 
$\Sh({\cal C}\op,J)$. Let us take $R$ to be the initial regular ring $I$,
and consider the cosieves $T^I_A$ and $T^I_B$ where $A$ and $B$ are
complementary subsets of $\Bb P$, both of them infinite.

\begin{lemma}
Considered as subterminal objects in $\Sh({\cal C}\op,J)$, the two 
cosieves just defined satisfy $\neg T_A=T_B$ and $\neg T_B=T_A$. 
\end{lemma}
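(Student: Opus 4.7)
The plan is to identify each subterminal with its \emph{support}, i.e.\ the class of objects of $\cal C$ at which it takes the value $\{*\}$. Since $I$ is initial in $\cal C$, the representable ${\cal C}\,(I,-)$ is terminal, and $T_A^I$ is the subsheaf of $1$ with support $\{R\in\ob\ {\cal C}\mid \Char R\subseteq A\}$; similarly for $T_B$. Any subsheaf $V$ of $1$ has an upward-closed support (because $V$ is a functor ${\cal C}\to\Set$) that contains the degenerate ring $0$ (which is covered by the empty cosieve); the sheaf condition amounts to stability under $J$-covers, which for $T_A^I$ and $T_B^I$ follows from Lemma 1.2.

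For the easy inclusion $T_B\leq\neg T_A$, observe that $T_A\wedge T_B$ has support $\{R\mid \Char R\subseteq A\cap B=\emptyset\}$. Every non-degenerate regular ring has a quotient by some maximal ideal which is a field, hence a non-empty characteristic set; so only $R=0$ satisfies $\Char R=\emptyset$, and $T_A\wedge T_B$ is the initial subterminal.

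For the reverse inclusion $\neg T_A\leq T_B$ I would argue by contradiction: suppose some $R\neq 0$ lies in the support of $\neg T_A$ but satisfies $\Char R\not\subseteq B$. Then either some prime $q\in\Char R$ lies in $A$, or $0\in\Char R$; in the latter case Lemma 1.1 forces $\Char R$ to be cofinite in ${\Bb P}^+$, and since $A$ is infinite we may still pick some prime $q\in\Char R\cap A$. Consider the quotient $R':=R/(q)$: the ideal $(q)$ is a finitely-generated ordinary ring ideal, and since quotients of regular rings by ordinary ring ideals are again regular, $R'$ is an object of $\cal C$. The equivalent condition noted in the text ($q\in\Char R$ iff $R/(q)$ is non-degenerate) says $R'\neq 0$; and since $q\cdot 1=0$ in $R'$, every field-quotient of $R'$ has characteristic $q$, so $\Char R'=\{q\}\subseteq A$. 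Upward-closure of the support of $\neg T_A$ along the quotient $R\to R'$ puts $R'$ in the support of $\neg T_A$, while $\Char R'\subseteq A$ puts it in the support of $T_A$ --- contradicting $T_A\wedge\neg T_A=0$ at the nonzero object $R'$.

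By symmetry, exchanging the roles of $A$ and $B$ gives $\neg T_B=T_A$. The main obstacle is the characteristic-zero case: without some further input there need be no prime $q\in\Char R\cap A$ available to make the quotient-by-$q$ construction work, and Lemma 1.1 is precisely what rules this out, by forcing $0\in\Char R$ to imply cofiniteness of $\Char R$ and hence its non-empty intersection with the infinite set $A$.
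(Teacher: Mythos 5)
Your proof is correct and is essentially the paper's argument: the intersection calculation via $\Char R=\emptyset\Rightarrow R=0$, and for the reverse inclusion the quotients $R/(q)$ together with Lemma 1.1 and the infinitude of $A$ to rule out $0\in\Char R$, are exactly the paper's steps, merely rephrased contrapositively in the language of upward-closed supports rather than stable disjointness. No gaps.
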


\begin{proof}
Clearly, $T_A\cap T_B$ contains only the morphism $I\to 0$, since $0$
is the only regular ring whose set of characteristics is empty. But
since $0$ is covered by the empty cosieve, it represents the zero
object of $\Sh({\cal C}\op,J)$. Hence $T_B\subseteq \neg T_A$.
Moreover, a morphism \stp{h}{I}{R} belongs
to $\neg T_A$ iff it is stably disjoint from $T_A$; that is, if the
only morphism $R\to S$ such that the composite $I\to R \to S$
belongs to $T_A$ is the unique morphism $I\to 0$. But this implies
that, for all $p\in A$, the quotient $R/(p)$ must be degenerate; so
the nonzero members of $\Char R$ must all lie in $B$. And since $A$ is
infinite, it follows from Lemma 1.1 that $0\not\in\Char R$; so $\Char
R\subseteq B$ and $h\in T_B$.
\end{proof}

\begin{proposition}
The topos $\Sh({\cal C}\op,J)$ does not satisfy De Morgan's law.
\end{proposition}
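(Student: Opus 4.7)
The plan is to apply Lemma~1.3 directly and exhibit a single subterminal object at which De Morgan's law fails. Recall that De Morgan's law in a topos is equivalent to the identity $\neg U \vee \neg\neg U = 1$ for every subterminal $U$, so it is enough to refute this for one choice of $U$. Since $I$ is initial in $\cal C$ and $J$ is subcanonical, ${\cal C}\,(I,-) \cong 1$, so $T_A$ and $T_B$ are genuine subterminals. By Lemma~1.3 we have $\neg T_A = T_B$ and hence $\neg\neg T_A = \neg T_B = T_A$, and my task reduces to showing that $T_A \vee T_B \neq 1$ in $\Sh({\cal C}\op,J)$.

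The subterminal lattice of ${\cal C}\,(I,-)$ is the lattice of $J$-closed cosieves on $I$, and the join $T_A \vee T_B$ is the $J$-closure of the cosieve-theoretic union $T_A \cup T_B$; equality with $1$ would force $T_A \cup T_B$ itself to be $J$-covering on $I$. To rule this out I would invoke Lemma~1.2: any $J$-covering cosieve $S$ on $I$ satisfies $\Char I = \bigcup\{\Char(\cod h) \mid h \in S\}$. Now $\Char I = \Bb{P}^+$, since for every $p \in \Bb{P}^+$ the unique morphism from $I$ to any field of characteristic $p$ witnesses $p \in \Char I$. On the other hand, for every $h \in T_A \cup T_B$ we have $\Char(\cod h) \subseteq A \cup B = \Bb{P}$ (using that $A$ and $B$ are complementary subsets of $\Bb P$), so the right-hand side necessarily omits $0$ and therefore cannot equal $\Bb{P}^+$. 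Hence $T_A \cup T_B$ is not $J$-covering, $T_A \vee T_B \neq 1$, and De Morgan's law fails at $U = T_A$.

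I expect no serious obstacle here: the hard work has been done in Lemmas~1.1--1.3, and what remains is essentially a translation between the external subterminal calculus in the sheaf topos and the combinatorics of $J$-closed cosieves on the initial regular ring. The one feature worth highlighting is the interplay between the two infinitude hypotheses on $A$ and $B$: Lemma~1.1 feeds the infinitude of $A$ into the identity $\neg T_A = T_B$, while the fact that $A \cup B$ still misses $0$ is exactly what prevents $T_A \vee T_B$ from reaching $1$.
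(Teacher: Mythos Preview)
Your proposal is correct and follows essentially the same route as the paper: invoke Lemma~1.3 to identify $\neg T_A\vee\neg\neg T_A$ with $T_B\vee T_A$, and then use Lemma~1.2 (via $0\in\Char I$ but $0\notin A\cup B$) to see that $T_A\cup T_B$ is not $J$-covering. You have simply made explicit a few points the paper leaves tacit (that ${\cal C}(I,-)$ is terminal, that $\Char I=\Bb P^+$, and the role of the infinitude hypotheses).
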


\begin{proof}
With the notation of the previous lemma, it suffices to show that 
the join of $T_A$ and $T_B$ in the lattice of $J$-closed subterminal objects
is not the top element ${\cal C}\,(I,-)$; equivalently, that the
union $T_A\cup T_B$ is not $J$-covering. And this follows easily from
Lemma 1.2, since we have $0\in\Char I$, but $0$ does not occur in $\Char
R$ for any $I\to R$ in $T_A\cup T_B$.
\end{proof}

Recall that in \cite{caramello}, Theorem 3.1, a syntactic condition
was given for the classifying topos of a geometric theory $\Bb T$ to
satisfy De Morgan's law. We may illustrate the failure of that
condition explicitly in the present case: let $\phi$ be the formula
(in the empty context) $\bigvee_{p\in A}(p.1=0)$. Then the condition
would require the existence of geometric formulae $\psi_1$ and $\psi_2$
satisfying $(\top\vdash(\psi_1\vee\psi_2))$,
$((\psi_1\wedge\phi)\vdash\bot)$, and such that $\chi\wedge\phi$ is
consistent for every consistent $\chi$ satisfying
$(\chi\vdash\psi_2)$. But the second of these conditions forces
$(\psi_1\vdash\bigvee_{p\in B}(p.1=0))$, since by Lemma 1.3 the
disjunction on the right is the largest geometric formula inconsistent
with $\phi$, and hence the formula
$\psi_2$ must be valid in any field of characteristic $0$. It follows
that there can be only finitely many primes $p$ such that
$(\psi_2\wedge(p.1=0))$ is inconsistent; for if there were infinitely
many such $p$, we could obtain a feld of characteristic $0$ as an
ultraproduct of fields of these characteristics --- and although $\psi_2$
is not necessarily coherent, it can be written as a
disjunction of coherent formulae (\cite{elephant}, D1.3.8), from which
it follows that if $\neg\psi_2$ holds in each factor of an
ultraproduct, it must also hold in the ultraproduct itself.
Hence in particular, the formula $\chi=(\psi_2\wedge(p.1=0))$
must be consistent for some $p\in B$; but then $(\chi\wedge\phi)$
is inconsistent.

Note that the crucial element in the above argument is the fact that
the property of having characteristic zero is not definable by any
geometric formula in the theory of fields. This was already observed
in \cite{Diers}, where it was proved by a topological argument; it
also follows from Lemma 1.1, since if the property were definable by a formula
$\phi$ then the interpretation of $\phi$ in the generic field would be
a $J$-closed cosieve on the initial regular ring $I$, consisting of
morphisms $I\to R$ for which $\Char R = \{0\}$.

\section{The DeMorganization of the Theory of Fields}

In this section, our aim is to identify the DeMorganization of
$\Sh({\cal C}\op,J)$, as defined in \cite{caramello}, 1.3. It is
certainly of the form $\Sh({\cal C}\op,J')$ for some $J'\supseteq J$;
moreover, we can identify at least some of the additional cosieves
which $J'$ must contain. In general, given any subobject $A'\mono A$
in a topos $\cal E$, the inclusion $(\neg A'\vee\neg\neg A')\mono A$ must be
dense for the local operator correponding to the DeMorganization of
$\cal E$, since it can be expressed as the pullback of 
$1\amalg 1\mono\Omega_{\neg\neg}$ along the classifying map of $\neg
A'\mono A$. Hence, by the arguments presented in the previous section,
the $J$-closed cosieve $T_{\Bb P}$ must be $J'$-covering on the
initial regular ring $I$, since it contains $T_A\cup T_B$. But the codomain 
of every morphism 
\stp{h}{I}{R} in this cosieve has only a finite set of
characteristics; and if $\Char R=\{p_1,p_2,\ldots,p_n\}$ then we 
may $J$-cover $R$ by the cosieve generated by the quotient maps 
$R\to R/(p_i)$, $1\leq i \leq n$. (For, by composing covers of 
the form $S^{R'}_{p_i}$, we may show that $R$ is $J$-covered by 
the cosieve generated by these quotients together with $R\to
R/(qq^*-1)$, where $q$ is the product of the $p_i$; but the
latter quotient is degenerate because its characteristic set is empty.)
Hence we see that the cosieve generated by the quotient maps 
$I\to I/(p)$, $p\in{\Bb P}$, is $J'$-covering; equivalently, in
the corresponding quotient theory of fields, the geometric sequent
$$(\top\vdash_{\empstg}\bigvee_{p\in{\Bb P}} (p.1=0)) $$
must be provable. 

Now fix a (nonzero) characteristic $p$, and let ${\Bb I}_p$ denote the
set of all monic polynomials in ${\Bb Z}[X]$ whose coefficients are
all in the range $\{0,1,\ldots,p-1\}$ and which are irreducible as
polynomials over ${\Bb Z}/(p)$. (We include the linear polynomials
$X+j$, $0\leq j\leq p-1$.) We write ${\Bb I}_p^+$ for ${\Bb
  I}_p\cup\infty$; now if $R$ is a regular ring with $\Char R=\{p\}$
and $x\in R$, we define the {\it type\/} of $x$ to be the subset of
${\Bb I}_p^+$ defined by
\begin{itemize}
\item $f(X)\in \type(x)$ iff there exists a homomorphism
  \stp{h}{R}{F}, $F$ a field, such that $f(h(x))=0$; and
\item $\infty\in\type(x)$ iff there exists a homomorphism
  \stp{h}{R}{F}, $F$ a field, such that $h(x)$ is transcendental over
  the prime field of $F$.
\end{itemize}
By arguments like those of Lemma 1.1, we may show that if $R$ is
finitely-presented as a regular ring, then the type of any $x\in R$ is
either a finite set not containing $\infty$, or a cofinite set
containing $\infty$. And if we partition ${\Bb I}_p$ into two
disjoint infinite sets $C$ and $D$, we may construct $J$-closed
cosieves $U_C$ and $U_D$ on $I[x]/(p)$ (the quotient of the free
regular ring on one generator $x$ by the ideal of multiples of $p$),
such that \stp{h}{I[x]/(p)}{R} belongs to $U_C$ iff
$\type(h(x))\subseteq C$, and similarly for $U_D$. Just as before, we
may show that each of $U_C$ and $U_D$ is the negation of the other as
a subobject of ${\cal C}\,(I[x]/(p),-)$,
and that their union is not $J$-covering; but it must be
$J'$-covering. By composing with suitable $J$-covering cosieves as
before, we deduce that $I[x]/(p)$ is $J'$-covered by the cosieve generated
by all quotient maps $I[x]/(p)\to I[x]/(p,f(x))$, $f\in {\Bb I}_p$.

In logical terms, this means that the corresponding theory must
satisfy the sequent
$$((p.1=0) \vdash_x \bigvee_{f\in {\Bb I}_p}(f(x)=0))\ ,$$
and hence that it satisfies the sequent
$$((p.1=0)\vdash_x\bigvee_{f\in{\Bb M}}(f(x)=0)) $$
where $\Bb M$ is the set of all monic polynomials over ${\Bb
  Z}$. Given the sequent established earlier, it follows that we have
$$(\top\vdash_x\bigvee_{f\in{\Bb M}}(f(x)=0))\ ,$$
or equivalently that $I[x]$ itself is covered by the cosieve generated
by all quotient maps $I[x]\to I[x]/(f(x))$, $f\in{\Bb M}$.  

Thus we have shown that $J'$ must contain the coverage ($J''$, say)
which is generated by $J$ together with the cosieves generated by 
$(I\to I/(p)\mid p\in{\Bb P})$ and $(I[x]\to I[x]/(f(x))\mid f\in{\Bb
  M})$. We note that $\Sh({\cal C}\op,J'')$ is dense in $\Sh({\cal
  C}\op,J)$, since these additional cosieves are stably
nonempty. (However, $J''$ is not subcanonical: the induced map $I\to
\prod_{p\in{\Bb P}}I/(p)$ is not an isomorphism (since its domain is
countable but its codomain is not), which says that ${\cal
  C}\,(I[x],-)$ does not satisfy the sheaf axiom for the first of the
new covers.)
To show that $J'$ coincides with $J''$, it suffices to show that
$\Sh({\cal C}\op,J'')$ satisfies De Morgan's law. First we need

\begin{lemma} 
The coverage $J''$ described above is\/ {\em rigid} in the
sense of\/ {\em \cite{elephant}, C2.2.18}: that is, every object $R$ has a
smallest $J''$-covering cosieve, which is generated by the set of morphisms
$R\to S$ for which $S$ is\/ {\em irreducible}, i.e.\ has no covering cosieves 
other than the maximal one.
\end{lemma}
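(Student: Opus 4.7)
The plan is to identify the $J''$-irreducible objects of $\cal C$ as exactly the finite fields, and then to show that every object admits a $J''$-cover by morphisms into such irreducibles. Granting both steps, the lemma follows immediately: for any $J''$-cover $C$ of $R$ and any $f: R \to F$ with $F$ irreducible, the pullback $f^*C$ is a $J''$-cover of $F$, which must be maximal, so $f \in C$. Thus the cosieve on $R$ generated by all morphisms to irreducibles is contained in every $J''$-covering cosieve, and (once shown to be itself $J''$-covering) is the smallest such.

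To characterize the irreducibles, I would fix a finite field $F$ and call a cosieve on an object $X$ \emph{good} if it contains every morphism from $X$ into some finite field. A routine check shows that the good cosieves form a Grothendieck topology containing all the generators of $J''$, using that: (i) for $S^R_a$, any map to a field sends $a$ either to $0$ or to a unit and so factors through $R/(a)$ or $R/(aa^*-1)$; (ii) the empty cosieve on $0$ is vacuously good; (iii) any $I \to F'$ factors through $I/(p)$ with $p = \Char F'$; and (iv) any $I[x] \to F'$ sends $x$ to an element algebraic over $\Bb F_p$ and so factors through some $I[x]/(f(x))$. Hence every $J''$-cover of $F$ is good and thus contains $1_F$, making $F$ irreducible. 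Conversely, applying (i), (iii), (iv) in turn forces any irreducible $S$ to be a field of positive characteristic with every element algebraic over its prime subfield; combined with finite presentation as a regular ring, this makes $S$ a finite field.

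To produce the required $J''$-cover of an arbitrary $R$ by maps to finite fields, I would first pull back the $I$-generator along $I \to R$ to cover $R$ by $(R \to R/(p))_{p \in \Bb P}$, reducing to the case $\Char R \subseteq \{p\}$. Presenting such an $R$ as $\Bb F_p\{x_1, \ldots, x_n\}$ modulo finitely many relations and iteratively pulling back the $I[x]$-generator along each evaluation $e_{x_i}: I[x] \to R$ (combining via transitivity), I obtain a $J''$-cover of $R$ by the quotients $R_0 = R/(f_1(x_1), \ldots, f_n(x_n))$ as the $f_i$ range over $\Bb M$. In each $R_0$ every generator is algebraic over $\Bb F_p$, bounding all residue fields inside a fixed finite extension of $\Bb F_p$ and forcing $R_0$ itself to be finite. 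A finite commutative regular ring is a product of finite fields, and iterated use of $S^{R_0}_{e_j}$ at the idempotents $e_j$ of the factors then refines the cover to the projections onto the finite-field components.

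The hard part will be this last construction, in particular verifying that after imposing the monic relations $f_i(x_i) = 0$ the resulting regular ring is genuinely finite: this relies on bounding the number of residue fields by counting tuples in a fixed finite algebraic closure that satisfy all the imposed relations, together with the observation that a finite commutative von Neumann regular ring is Artinian and reduced, hence a product of fields.
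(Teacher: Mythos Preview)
Your proposal is correct and follows essentially the same route as the paper: identify the irreducibles as the finite fields, then show that every object of $\cal C$ is $J''$-covered by quotient maps to finite fields via first imposing a prime $p$, then monic relations on each generator, then splitting the resulting finite regular ring by idempotents. The paper is terser in places---it justifies irreducibility of finite fields by the remark that $J''$-covers are generated by quotient maps, rather than your ``good cosieves form a topology'' argument, and it simply asserts that $R/(p,f_1(x_1),\ldots,f_n(x_n))$ is finite without your residue-field count---but the skeleton is the same, and your explicit proof of the converse (that every irreducible is a finite field) is a harmless addition which in any case follows a posteriori once the cover by finite fields is in hand.
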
 

\begin{proof} 
We note first that every finite field occurs as an object of $\cal C$
(although, as we observed earlier, no field of characteristic zero
does so), and they are irreducible for $J''$ since each $J''$-covering
cosieve
is generated by a family of quotient maps, and a field has no proper
quotients. On the other hand, if $R$ is any finitely-presented regular
ring (with finite generating set $\{x_1,\ldots,x_n\}$, say), then by
pushing out and composing copies of the two new covers we may show
that the cosieve generated by all homomorphisms
\stp{h}{R}{R/(p,f_1(x_1),\ldots,f_n(x_n))}, where $p$ is a prime and each
$f_i$ is a monic polynomial, is $J''$-covering. The codomain ($S$,
say) of such a morphism need not be a field; but it is a finite
regular ring. So, if it is not already a field (or degenerate), it
contains some nontrivial idempotent $yy^*$ (where $y$ is neither
invertible nor zero), and the cosieve generated by the quotient maps
$S\to S/(yy^*)$ and $S\to S/(yy^*-1)$ is $J$-covering. Proceeding
inductively, we deduce that $S$ is $J$-covered by the cosieve
generated by its quotient maps to fields, and hence $R$ is
$J''$-covered  by the cosieve generated by its quotient maps to fields.
\end{proof}

\begin{corollary}
$\Sh({\cal C}\op,J'')$ satisfies De Morgan's law.
\end{corollary}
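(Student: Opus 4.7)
The plan is to exploit the rigidity of $J''$ established in Lemma 2.1. Applying the standard fact that the sheaf topos on a rigid site coincides with the presheaf topos on its full subcategory of irreducibles (see \cite{elephant}, C2.2), we obtain
\[ \Sh({\cal C}\op,J'') \ \simeq\ [\mathcal{I},\Set], \]
where $\mathcal{I}\subseteq{\cal C}$ denotes the full subcategory on the finite fields (the $J''$-irreducibles identified in the proof of Lemma 2.1).

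Next I would invoke the standard criterion (\cite{elephant}, D4.6) for a presheaf topos of this form to satisfy De Morgan's law, which reduces to a combinatorial amalgamation condition: every span $A\leftarrow B\rightarrow C$ in $\mathcal{I}$ admits a completion to a commutative square through some cospan $A\rightarrow D\leftarrow C$ in $\mathcal{I}$.

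Verifying this condition is straightforward. A span in $\mathcal{I}$ necessarily involves finite fields of a common characteristic $p$, since every ring homomorphism preserves characteristic, so it has the form ${\Bb F}_{p^a}\leftarrow{\Bb F}_{p^d}\rightarrow{\Bb F}_{p^b}$ with $d\mid a$ and $d\mid b$. Taking $c=\mathrm{lcm}(a,b)$, the field ${\Bb F}_{p^c}$ admits embeddings of both ${\Bb F}_{p^a}$ and ${\Bb F}_{p^b}$. The two resulting composite embeddings ${\Bb F}_{p^d}\to{\Bb F}_{p^c}$ land in the unique order-$p^d$ subfield of ${\Bb F}_{p^c}$, and so differ at worst by an automorphism of ${\Bb F}_{p^d}$ over ${\Bb F}_p$; since the Galois restriction map $\mathrm{Gal}({\Bb F}_{p^c}/{\Bb F}_p)\to\mathrm{Gal}({\Bb F}_{p^d}/{\Bb F}_p)$ is surjective, we may post-compose one of the chosen embeddings ${\Bb F}_{p^a}\hookrightarrow{\Bb F}_{p^c}$ with a suitable automorphism of ${\Bb F}_{p^c}$ to bring the two composites into agreement, yielding the desired commutative square.

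The step I expect to be the main obstacle is pinning down the precise form of the De Morgan criterion in the presheaf-topos setting and tracking the variance introduced by the rigidity equivalence, so that the resulting condition is amalgamation of spans (rather than cospans) in $\mathcal{I}$; once that is in hand, the field-theoretic check above is entirely elementary.
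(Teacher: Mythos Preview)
Your approach is essentially identical to the paper's: use the rigidity of $J''$ (Lemma~2.1) to identify $\Sh({\cal C}\op,J'')$ with the presheaf topos $[{\cal C}',\Set]$ on the category of finite fields, and then invoke the Ore/amalgamation criterion for a presheaf topos to satisfy De Morgan's law (\cite{elephant}, D4.6.3($a$)). The paper simply asserts that the category of finite fields satisfies the Ore condition, without spelling out the Galois-theoretic verification you give; your worry about variance is reasonable caution, but you have in fact landed on the correct condition (amalgamation of spans in $\mathcal{I}$), so there is no gap.
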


\begin{proof}
The previous lemma implies that it is equivalent to the functor
category $[{\cal C}',\Set]$, where ${\cal C}'$ is the full subcategory 
of $\cal C$ whose objects are finite fields; and this category
satisfies the Ore condition (cf.~\cite{elephant}, D4.6.3($a$)).
\end{proof}

Thus we have proved

\begin{proposition}
The DeMorganization of the classifying topos for the theory of fields is
the classifying topos for the geometric theory of fields of finite
characteristic, in which every element is algebraic over the prime
field.
\qed\end{proposition}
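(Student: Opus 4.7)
The plan is to combine the work done so far, which exhibits the dense De Morgan subtopos $\Sh({\cal C}\op, J'')$ of $\Sh({\cal C}\op, J)$, with a syntactic identification of the theory that $\Sh({\cal C}\op, J'')$ classifies. First I would argue that $\Sh({\cal C}\op, J'')$ actually \emph{is} the DeMorganization. The preceding discussion shows $J'\supseteq J''$, which on the topos side means $\Sh({\cal C}\op, J')\subseteq \Sh({\cal C}\op, J'')$; conversely, by Corollary 2.2 together with the density remark preceding Lemma 2.1, $\Sh({\cal C}\op, J'')$ is a dense De Morgan subtopos of $\Sh({\cal C}\op, J)$, and is therefore contained in the largest such subtopos, namely the DeMorganization $\Sh({\cal C}\op, J')$. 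Equality follows.

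Next I would appeal to the standard correspondence (\cite{elephant}, D3.1.11 and the surrounding material on quotient theories) between Grothendieck topologies $K\supseteq J$ on ${\cal C}\op$ and quotient theories of the theory of fields over the same signature: $\Sh({\cal C}\op, J'')$ classifies the quotient of the theory of fields obtained by adding exactly those geometric sequents which force the generators of $J''$ not already in $J$ to be covering. The two new generating families of $J''$ supply, respectively, the sequent
$$(\top\vdash_{\empstg}\bigvee_{p\in{\Bb P}}(p.1=0))$$
(from the cosieve $(I\to I/(p)\mid p\in{\Bb P})$), and the sequent
$$(\top\vdash_x\bigvee_{f\in{\Bb M}}(f(x)=0))$$
(from the cosieve $(I[x]\to I[x]/(f(x))\mid f\in{\Bb M})$).

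Finally I would check that these two sequents, added to the theory of fields, axiomatize precisely the theory of fields of finite characteristic in which every element is algebraic over the prime field. The first sequent forces any model to have a specific prime characteristic $p$; in such a model, a monic polynomial $f\in{\Bb Z}[X]$ annihilating an element $x$ reduces modulo $p$ to a monic polynomial in ${\Bb Z}/(p)[X]$ annihilating $x$, so $x$ is algebraic over the prime subfield. Conversely, any field of finite characteristic all of whose elements are algebraic over its prime subfield evidently satisfies both sequents.

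The main obstacle I anticipate is being fully precise in the second step about the quotient-theory/topology dictionary: one must verify that the sequents displayed above really do cut out, relative to the theory of fields, the same subtopos as $J''$ cuts out relative to $J$, i.e.\ that no further geometric axioms beyond those read off from the two new generating families are needed. Granting the dictionary of \cite{elephant}, D3.1.11, this is routine and the proposition follows.
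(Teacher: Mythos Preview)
Your proposal is correct and follows essentially the same line as the paper. The paper itself gives no separate proof for this proposition: it has already derived the two sequents, defined $J''$, noted density, and proved Corollary~2.2, after which it simply writes ``Thus we have proved'' and states the proposition; your write-up just makes explicit the two halves of the equality $J'=J''$ and the passage from the generating cosieves of $J''$ to the corresponding geometric sequents, exactly as the paper intends.
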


To complete the story, we may also identify the Booleanization of
$\Sh({\cal C}\op,J)$ --- equivalently, of $\Sh({\cal C}\op,J'')$. Note
first that since ${\cal C}'$ is the disjoint union of its
subcategories corresponding to the different primes, $\Sh({\cal
  C}\op,J'')\simeq[{\cal C}',\Set]$ is a coproduct of open
subtoposes corresponding to the primes, and its Booleanization is the
coproduct of the Booleanizations of these subtoposes. So it
suffices to fix a prime $p$, and consider the Booleanization of the
topos $[{\cal C}'_p,\Set]$ where ${\cal C}'_p$ denotes the category of
finite fields of characteristic $p$. Since this category satisfies the
Ore condition, its double-negation coverage simply consists of all
nonempty cosieves; that is, each finite field extension generates a
covering cosieve, This means that the corresponding theory satisfies
the sequents which say that every nonzero polynomial has a root; hence
the Booleanization of $[{\cal C}'_p,\Set]$ classifies the theory of
algebraically closed fields of characteristic $p$ which are algebraic
over ${\Bb Z}/(p)$. By \cite{elephant}, C3.5.8, this topos is atomic
(though not coherent; of course, coherence fails because of the
infinite disjunction in the axiom 
saying that every element is algebraic). Hence, by arguments like
those in \cite{elephant}, D3.4.10, it may be
identified with the topos $\Cont(G_p)$ of continuous $G_p$-sets, where
$G_p$ is the group of automorphisms of the algebraic closure of ${\Bb
  Z}/(p)$ (topologized, as usual, by saying that the pointwise stabilizers of
finite subsets form a basis for the open subgroups). Thus we have

\begin{proposition}
The Booleanization of the classifying topos for fields is the
classifying topos for the theory of algebraically closed fields of
finite characteristic, in which every element is algebraic over the
prime field. Moreover, it is atomic over $\Set$; in fact it may be
identified with the coproduct, over all primes $p$, of the
toposes $\Cont(G_p)$.
\qed\end{proposition}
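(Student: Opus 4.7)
The plan is to leverage the work already done in the section. By the discussion preceding the previous proposition, $\Sh({\cal C}\op,J'')$ is a dense subtopos of $\Sh({\cal C}\op,J)$, and by the proof of Corollary 2.2 (using the rigidity established in Lemma 2.1) it is equivalent to the functor category $[{\cal C}',\Set]$. Since dense Boolean subtoposes of a topos are unique up to equivalence, and any dense Boolean subtopos of $\Sh({\cal C}\op,J'')$ is automatically dense in $\Sh({\cal C}\op,J)$, the Booleanizations of $\Sh({\cal C}\op,J)$ and of $[{\cal C}',\Set]$ coincide. I would then exploit the decomposition ${\cal C}'=\coprod_{p\in{\Bb P}}{\cal C}'_p$ (no homomorphisms exist between finite fields of different characteristic) to write $[{\cal C}',\Set]$ as a coproduct of open subtoposes $[{\cal C}'_p,\Set]$ indexed by primes $p$, and use the fact that Booleanization commutes with such coproducts.

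Next, for each fixed prime $p$, I would compute the Booleanization of $[{\cal C}'_p,\Set]$ directly on the site $({\cal C}'_p)\op$. The category ${\cal C}'_p$ of finite fields of characteristic $p$ satisfies the Ore condition (any two extensions of a common finite field embed into a common further extension, e.g.\ into their compositum), so the $\neg\neg$-topology on $({\cal C}'_p)\op$ coincides with the atomic topology in which every nonempty sieve covers. In logical terms, $[{\cal C}'_p,\Set]$ classifies the theory of fields of characteristic $p$ algebraic over ${\Bb F}_p$, and forcing every inclusion of finite fields $F\mono F'$ to become covering amounts syntactically to adjoining the axioms asserting that every nonzero polynomial has a root, yielding the theory of algebraically closed fields of characteristic $p$ algebraic over the prime field. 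Assembling the contributions from all primes gives the first clause of the proposition.

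For the identification with $\Cont(G_p)$ and for atomicity, I would appeal to the recognition theorem for pointed atomic toposes --- concretely, Elephant C3.5.8 (which gives atomicity of sheaves on a site satisfying Ore with the atomic topology) together with D3.4.10 (which identifies such a topos, when equipped with a conservative point, with the topos of continuous actions of the automorphism group of that point, endowed with the pointwise-convergence profinite topology). The algebraic closure $\overline{{\Bb F}_p}$ provides a point of $\Sh(({\cal C}'_p)\op,J_{\mathrm{at}})$ that is conservative because every finite field of characteristic $p$ embeds uniquely into $\overline{{\Bb F}_p}$; its automorphism group is the absolute Galois group $G_p=\mathrm{Gal}(\overline{{\Bb F}_p}/{\Bb F}_p)$ with the stated topology, giving the equivalence with $\Cont(G_p)$. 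The main obstacle I anticipate is the careful verification of the hypotheses of this recognition theorem, which reduces to the classical facts that Galois extensions are cofinal in the directed system of finite extensions of ${\Bb F}_p$ and that every such extension sits uniquely inside $\overline{{\Bb F}_p}$. Atomicity of the full coproduct over ${\Bb P}$ then follows from atomicity of each summand and the stability of atomic morphisms under coproducts.
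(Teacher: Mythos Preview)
Your proposal is correct and follows essentially the same route as the paper: pass to the dense subtopos $[{\cal C}',\Set]$, decompose it as a coproduct over primes, observe that Ore on ${\cal C}'_p$ makes the $\neg\neg$-topology coincide with the atomic topology (so that every finite extension covers, forcing algebraic closure), and then invoke C3.5.8 and D3.4.10 for atomicity and the identification with $\Cont(G_p)$. One small slip: finite fields of characteristic $p$ do \emph{not} embed uniquely into $\overline{{\Bb F}_p}$ (there are $n$ embeddings of ${\Bb F}_{p^n}$, permuted by Frobenius), but uniqueness is irrelevant---mere existence of an embedding is what gives conservativity of the point.
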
 

It is of interest to note that the (coherent) theory of algebraically
closed fields of a given characteristic is complete but not
$\aleph_0$-categorical (because a countable algebraically closed field
may or may not contain transcendentals); hence, by the results of
Blass and \v{S}\v{c}edrov \cite{bs}, the classifying topos of this theory is
not Boolean. If we add the infinitary geometric axiom which says that
there are no transcendentals, we get a theory which is
`$\aleph_0$-categorical' (to the extent that this concept is
meaningful for infinitary theories), and its classifying topos is not
merely Boolean but atomic. It is also worth noting the following:

\begin{corollary}
Let $\cal E$ be the classifying topos for the theory of algebraically
closed fields of some fixed characteristic $p$. Then the DeMorganization
of $\cal E$ coincides with its Booleanization.
\end{corollary}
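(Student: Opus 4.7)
The plan is to prove that the DeMorganization of $\cal E$ is contained in its Booleanization; the reverse inclusion is automatic, since any Boolean topos satisfies De Morgan's law and the Booleanization of $\cal E$ is a dense Boolean subtopos, hence a dense De Morgan one. Because the Booleanization is the unique dense Boolean subtopos of $\cal E$ and the DeMorganization is dense in $\cal E$ by construction, it will suffice to show that the DeMorganization of $\cal E$ is itself Boolean; and my strategy for doing so is to exhibit it as a subtopos of $\Cont(G_p)$, the atomic Boolean topos identified earlier in this section as the classifying topos for algebraically closed fields of characteristic $p$ in which every element is algebraic over $\Bb Z/(p)$. Since any subtopos of a Boolean topos is Boolean, this containment will imply that the DeMorganization of $\cal E$ is Boolean, hence coincides with the Booleanization.

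To establish the containment I plan to replay the construction of $U_C, U_D$ from earlier in this section, now carried out inside $\cal E$ rather than inside $\Sh({\cal C}\op, J)$. Partition $\Bb I_p$ into two infinite disjoint sets $C$ and $D$, and form the subobjects $U_C, U_D$ of the generic field in $\cal E$ cut out by the geometric formulas $\bigvee_{f\in C}(f(x)=0)$ and $\bigvee_{f\in D}(f(x)=0)$. The type-arithmetic used earlier to show $\neg U_C = U_D$ and $\neg U_D = U_C$ still applies, while $U_C \vee U_D$ is a proper subobject of the generic field because the generic model of $\cal E$ admits transcendental elements. By the observation at the start of this section --- that $(\neg A' \vee \neg\neg A')\mono A$ is always dense for the local operator defining the DeMorganization --- this forces $U_C \vee U_D$ to become the whole generic field in the DeMorganization of $\cal E$, which translates to the sequent $(\top \vdash_x \bigvee_{f \in \Bb I_p}(f(x)=0))$ being provable in the corresponding quotient theory. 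That quotient theory therefore proves ``every element is algebraic over $\Bb Z/(p)$'', giving the desired containment.

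The main obstacle will be transferring the subobject computations --- in particular the identities $\neg U_C = U_D$ and the failure of $U_C \vee U_D$ to be covering --- from the site used earlier for the theory of fields to the finer local operator carving $\cal E$ out of $\Sh({\cal C}\op, J)$. This reduces to verifying that $U_C \vee U_D$ still fails to be covering in $\cal E$ despite the additional covers imposing characteristic $p$ and algebraic closure; and this follows from the existence of transcendentals in the generic algebraically closed field of characteristic $p$.
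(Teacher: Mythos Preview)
Your approach is correct but differs from the paper's. The paper argues as follows: the classifying topos $\cal E$ for algebraically closed fields of characteristic $p$ is a dense subtopos of $\Sh({\cal C}_p\op,J)$, the classifying topos for fields of characteristic $p$; then by Proposition 1.5 of \cite{caramello} (the DeMorganization of a dense subtopos is its intersection with the DeMorganization of the ambient topos), the DeMorganization of $\cal E$ is the intersection of $\cal E$ with the already-computed DeMorganization of $\Sh({\cal C}_p\op,J)$, which classifies fields of characteristic $p$ algebraic over the prime field; this intersection is exactly the Booleanization. You instead re-run the $U_C,U_D$ argument inside $\cal E$ to conclude directly that the DeMorganization of $\cal E$ satisfies the ``every element is algebraic'' sequent, and then use that any subtopos of the Boolean topos $\Cont(G_p)$ is Boolean.

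The paper's route is cleaner because it leverages the general Proposition 1.5 and the work already done in this section; yours is more self-contained but duplicates that work. One point deserves care: your assertion that ``the type-arithmetic used earlier to show $\neg U_C=U_D$ \ldots\ still applies'' is not quite a proof, since negation in the subtopos $\cal E$ is \emph{a priori} different from negation in $\Sh({\cal C}_p\op,J)$. What makes it go through is precisely the density of $\cal E$ in $\Sh({\cal C}_p\op,J)$: for a dense local operator $j$ one has $j\leq\neg\neg$, so any $\neg\neg$-closed subobject (in particular any negation) is $j$-closed, and hence negations computed in the ambient topos agree with those computed in $\cal E$. Once you make that explicit, your argument is complete --- but note that this density fact is exactly the hypothesis that lets the paper invoke Proposition 1.5, so the two proofs rest on the same foundation.
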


\begin{proof}
First note that we may obtain a classifying topos for the theory of
fields of characteristic $p$ by cutting down the site $({\cal
  C}\op,J)$ to the full subcategory ${\cal C}_p$ of finitely-presented regular
rings in which $p.1=0$ (and leaving the definition of the coverage
unchanged). It is readily seen that the classifying topos for
algebraically closed fields of characteristic $p$ is a dense subtopos
of this; hence, by \cite{caramello}, Proposition 1.5, its
DeMorganization is simply
its intersection with the DeMorganization of $\Sh({\cal C}_p\op,J)$. But
the latter classifies the theory of fields of characteristic $p$ in
which every element is algebraic; so when we form the intersection we
obtain the Booleanization of $\Sh({\cal C}_p\op,J)$.
\end{proof}

The result of the Corollary remains true in characteristic $0$, but
the proof requires a little more work. We omit the details.

Finally, we make some remarks about the comparison between the
(coherent) theory of fields and
the theory considered in \cite{Diers} under the name `Diers
fields'. The latter is the theory classified by the functor category
$[{\cal D},\Set]$ where $\cal D$ is the category of all fields which
are finitely-generated over their prime fields (the
finitely-presentable objects in the category of fields). Since this
category also satisfies the Ore condition, the classifying topos for
Diers fields satisfies De Morgan's law. In \cite{Diers} the second
author gave a sketch of how to present the theory of Diers
fields by adding appropriate new predicates and axioms to the coherent
theory of fields, from which it follows that there is a canonical
geometric morphism \stp{u}{[{\cal D},\Set]}{\Sh({\cal C}\op,J)}
corresponding to the forgetful functor from Diers fields to
fields. This morphism is surjective (since every morphism $\Set
\to\Sh({\cal C}\op,J)$ factors through it); so it might be tempting to
conjecture that it is the Gleason cover of $\Sh({\cal C}\op,J)$ in the
sense of \cite{elephant}, D4.6.8. However, this is not the case; since
$\Sh({\cal C}\op,J)$ is a coherent topos and $[{\cal D},\Set]$ is
not compact (\cite{Diers}, 5.1), the geometric morphism between them
cannot be proper (\cite{elephant}, C3.2.16(i)). Also, we have

\begin{lemma}
The geometric morphism $u$ defined above is not skeletal in the sense of\/
{\em \cite{elephant}, D4.6.9}.
\end{lemma}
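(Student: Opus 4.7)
The definition from \cite{elephant}, D4.6.9, is that $u$ is skeletal precisely when $u^*$ sends $\neg\neg$-dense monomorphisms to $\neg\neg$-dense monomorphisms. The plan is to reuse the witness constructed in Section 1. Fix a partition ${\Bb P} = A \sqcup B$ into two infinite sets and let $U = T_A \cup T_B$, considered as a subterminal of $\Sh({\cal C}\op,J)$. By Lemma 1.3 we have $\neg T_A = T_B$ and $\neg T_B = T_A$, so $\neg U = \neg T_A \cap \neg T_B = T_B \cap T_A$; this intersection contains only the map $I \to 0$ and therefore represents the zero subterminal, just as in the proof of Lemma 1.3. Hence $\neg\neg U = \top$, so $U \mono 1$ is $\neg\neg$-dense in $\Sh({\cal C}\op,J)$ even though, by Proposition 1.4, $U$ is a proper subterminal.

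Next, I would identify the pullback $u^*(U)$ in $[{\cal D},\Set]$. Since $u^*$ preserves the interpretation of geometric formulae between the generic field of $\Sh({\cal C}\op,J)$ and the underlying field of the generic Diers field of $[{\cal D},\Set]$, and since $U$ is the interpretation of the formula $\bigvee_{p \in {\Bb P}}(p.1=0)$ in the generic field, the subterminal $u^*(U)$ is carved out by the positive-characteristic objects of ${\cal D}$: at $F \in {\cal D}$, its value is $\{*\}$ iff $F$ has some prime characteristic.

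Finally, recall that in the presheaf topos $[{\cal D},\Set]$ a subterminal $V$ is $\neg\neg$-dense iff for every $F \in {\cal D}$ and every morphism $F \to F'$ there exists a further morphism $F' \to F''$ with $F'' \in V$. Since ring homomorphisms between fields are injective, the characteristic is preserved along morphisms in ${\cal D}$, so ${\Bb Q} \in {\cal D}$ admits no morphism into any positive-characteristic field; hence $\neg\neg u^*(U)$ vanishes at ${\Bb Q}$, $u^*(U) \mono 1$ fails to be $\neg\neg$-dense, and $u$ is not skeletal. The only non-trivial step is the identification of $u^*(U)$, which rests on the standard fact that the inverse image of a geometric morphism classified by a model commutes with interpretation of geometric formulas; the remainder is a direct calculation in $[{\cal D},\Set]$.
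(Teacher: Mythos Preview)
Your argument is correct and takes a genuinely different route from the paper's. The paper uses the characterization that $u$ is skeletal iff it restricts to a morphism $\sh_{\neg\neg}([{\cal D},\Set])\to\sh_{\neg\neg}(\Sh({\cal C}\op,J))$; it then identifies the points of both Booleanizations (invoking Proposition~2.4 for the codomain, and a separate analysis of the double-negation topology on $[{\cal D},\Set]$ for the domain) and observes that, for instance, an algebraically closed field of characteristic $0$ and infinite transcendence degree is a point of the former Booleanization but its underlying field is not a point of the latter. You instead work with the equivalent characterization via preservation of $\neg\neg$-dense monomorphisms and recycle the Section~1 witness directly. Your approach is more economical, as it avoids identifying either Booleanization; the paper's approach, on the other hand, yields the extra information about what $\sh_{\neg\neg}([{\cal D},\Set])$ actually looks like.

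One small point deserves care: the assertion that $U=T_A\cup T_B$ ``is the interpretation of $\bigvee_{p\in{\Bb P}}(p.1=0)$'' is not literally true at the level of cosieves, since a finitely-presented regular ring $R$ with $\Char R=\{p,q\}$, $p\in A$, $q\in B$, lies in $T_{\Bb P}$ but not in the set-theoretic union $T_A\cup T_B$. What is true is that the join $T_A\vee T_B$ computed in $\Sh({\cal C}\op,J)$ (that is, the $J$-closure of the union) equals $T_{\Bb P}$, and it is $T_{\Bb P}$ that interprets the formula. Alternatively, and more simply, since $U\leq T_{\Bb P}$ and $u^*$ is order-preserving, it suffices for your purposes that $u^*(T_{\Bb P})$ already vanishes at ${\Bb Q}$; either way your conclusion stands.
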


\begin{proof}
We recall that a morphism \stp{f}{{\cal F}}{{\cal E}} is skeletal iff
it maps $\sh_{\neg\neg}({\cal F})$ into $\sh_{\neg\neg}({\cal E})$. 
But we may identify the Booleanization of $[{\cal D},\Set]$, as we did
that of $\Sh({\cal C}\op,J)$: since every field extension in $\cal D$
(including transcendental extensions) generates a covering sieve for
the corresponding Grothendieck topology, it is easy to see that the
points of $\sh_{\neg\neg}([{\cal D},\Set])$ are the fields which are
injective with respect to all morphisms of $\cal D$; hence they are
exactly the
algebraically closed fields (of any characteristic) which have
infinite transcendence degree over their prime fields. (It is
straightforward to use the additional predicates in the theory of
Diers fields to express in geometric terms the statement that a field
contains infinitely many independent transcendentals.) So $u$ does not
map the points of $\sh_{\neg\neg}([{\cal D},\Set])$ into those of
$\sh_{\neg\neg}(\Sh({\cal C}\op,J))$.
\end{proof}

We do not know whether $u$ can be factored through the Gleason
cover of $\Sh({\cal C}\op,J)$. Since it is not skeletal, we cannot
appeal to \cite{elephant}, D4.6.12 to construct such a factorization;
and since $[{\cal D},\Set]$ is not localic over $\Set$, we cannot
appeal to the projectivity theorem (\cite{elephant}, D4.6.15) either.

\end{document}